\newtheorem{theorem}{Theorem}
\theoremstyle{plain}
\newtheorem{corollary}{Corollary}
\newtheorem{definition}{Definition}
\newtheorem{remark}{Remark}
\numberwithin{equation}{section}
\def\diag{\mathcal{D}_{n}\left(\R\right)}
\def\mh{M_{n}^{h}(\C)}
\newcommand{\SPPPT}{\mbox{\protect $\mathcal{P}(n)$}}
\newcommand{\ql}{Q_\lambda}
\newcommand{\qml}{Q_{-\lambda}}
\def\R{\mathbb{R}}   %Reales
\def\C{\mathbb{C}}   %Complejos
\begin{document}
\title[Minimal Hermitian matrices]{A characterization of Hermitian matrices with variable diagonal and smallest operator norm}

\author[Andruchow]{Andruchow, E.}
\author[Larotonda]{Larotonda, G.}
\author[Recht]{Recht, L.}
\author[Varela]{Varela, A.}

\address[Andruchow, Esteban; Larotonda, Gabriel and Varela, Alejandro]
{\newline  Instituto de Ciencias, \newline
\indent Universidad Nacional de General Sarmiento \newline
\indent J.\ M.\ Guti\'errez 1150, (1613) Los Polvorines, Argentina
\newline and
\newline  Instituto Argentino de Matem\'atica (IAM) ``Alberto P. Calder\'on'', CONICET, \newline \indent Saavedra 15, 3er piso \newline \indent 
(C1083ACA), Buenos Aires, Argentina.}

\email[Andruchow, Esteban]{eandruch@ungs.edu.ar}%
\email[Larotonda, Gabriel]{glaroton@ungs.edu.ar}%
\email[Varela, Alejandro]{avarela@ungs.edu.ar}%

\address[Recht, L\'azaro]{\newline  Universidad Sim\'{o}n Bol\'{\i}var,\newline%
\indent Apartado 89000, Caracas 1080A, Venezuela}%
\email[Recht, L\'azaro]{recht@usb.ve}%

%\date{}
\subjclass[2010]{15A12, 15B51, 15B57, 15A60, 58B25.} %

%\keywords{Keyword one, keyword two etc.}%
%\dedicatory{Dedicated to Professor XY on the occasion of his seventieth birthday.}

\begin{abstract}
We describe properties of a Hermitian square matrix $M\in M_n(\C)$ equivalent to that of having minimal quotient norm in the following sense:
$$
\|M\|\leq \| M+D\|
$$
for all real diagonal matrices $D\in M_n(\C)$ and $\|\ \|$ the operator norm. These matrices are related to some particular positive matrices with their range included in the eigenspaces of the eigenvalues $\pm \|M\|$ of $M$. We show how a constructive method can be used to obtain minimal matrices of any dimension relating this problem with majorization results in $\R^n$.
\end{abstract}
\keywords{minimal hermitian matrix, diagonal matrix, quotient operator norm, best approximation}

\maketitle

\section{Introduction}

Let $M_n(\C)$ and $\diag$ be the algebras of complex and real diagonal $n\times n$  matrices. We are interested in describing Hermitian matrices $M\in M_n(\C)$ that  verify 
$$
\|M\|\leq \|M+D\|, \text{ for all } D\in\diag
$$
or equivalently
$$
\|M\|=\text{dist}\left(M,\diag \right)
$$
(where $\|\ \|$ denotes the operator norm).
These $M$ will be called {\sl minimal} matrices and appeared in the study of the minimal length curves in the flag manifold $\SPPPT=\mathcal{U}(M_n(\C))/\mathcal{U}(\mathcal{D}_n(\C))$, where $\mathcal{U}(\mathcal{A})$ denotes the unitary matrices of the algebra $\mathcal{A}$. 
Namely, minimal curves in  \( \SPPPT \) are given by action of (the class of)  exponentials of  anti-Hermitian minimal $n\times n$  matrices. To study anti-Hermitian minimal $n\times n$  matrices is (isometrically) equivalent to investigate  the Hermitian minimal $n\times n$ matrices, and  we find them  notationaly simpler to consider.

The following theorem follows ideas in \cite{duranmatarecht}, where this problem was also studied in the context of von Neumann and C$^*$ algebras. The next result was proved in Theorem 3.3 of \cite{ammrv} as stated here. We write it down in its Hermitian form.

\begin{theorem}
 \label{teo matrices minimales}
 A Hermitian matrix $M\in M_n(\C)$ is minimal in the quotient norm with respect to the diagonals if, and only if,
 there exists a positive semidefinite matrix $P\in \mh$ such that,
 \begin{itemize}
 \item \label{teo matrices minimales C1} $P M^2=\lambda^2\,P$, where $||M||=\lambda$.
 \item \label{teo matrices minimales C2} The diagonal elements of the product $P M$ are all zero.
 \end{itemize}
 \end{theorem}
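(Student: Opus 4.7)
The plan is to use the duality between the operator norm and the trace norm on $\mh$, together with a Hahn--Banach argument for the direction from minimality to existence of $P$. The converse direction is then a short trace identity.

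For \emph{sufficiency}, suppose $P\ge 0$ with $P\ne 0$ satisfies (i) and (ii), and let $D\in\diag$ be arbitrary. I expand
$$
\text{tr}(P(M+D)^2)=\text{tr}(PM^2)+\text{tr}(PMD)+\text{tr}(PDM)+\text{tr}(PD^2).
$$
Condition (i) gives $\text{tr}(PM^2)=\lambda^2\,\text{tr}(P)$; condition (ii) gives $\text{tr}(PMD)=\sum_i(PM)_{ii}D_{ii}=0$, and symmetrically $\text{tr}(PDM)=0$; and $\text{tr}(PD^2)\ge 0$ because $P,D^2\ge 0$. On the other hand $(M+D)^2\le\|M+D\|^2 I$, so $\text{tr}(P(M+D)^2)\le\|M+D\|^2\,\text{tr}(P)$. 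Dividing by $\text{tr}(P)>0$ yields $\|M+D\|\ge\lambda=\|M\|$, proving minimality.

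For \emph{necessity}, assume $M$ is minimal, so $0$ is a best approximation to $M$ from $\diag\subset\mh$. By Hahn--Banach there exists a functional $\varphi\in(\mh,\|\cdot\|)^*$ of norm one with $\varphi|_{\diag}=0$ and $\varphi(M)=\lambda$. The trace--operator duality represents $\varphi(X)=\text{tr}(KX)$ for a Hermitian $K$ with $\|K\|_1=1$, and $\varphi|_{\diag}=0$ forces $K_{ii}=0$ for all $i$. I split $K=K_+-K_-$ into positive and negative parts (with orthogonal supports). From $-\lambda I\le M\le\lambda I$ one has $\text{tr}(K_+M)\le\lambda\,\text{tr}(K_+)$ and $\text{tr}(K_-M)\ge-\lambda\,\text{tr}(K_-)$, so
$$
\lambda=\text{tr}(KM)=\text{tr}(K_+M)-\text{tr}(K_-M)\le\lambda\bigl(\text{tr}(K_+)+\text{tr}(K_-)\bigr)=\lambda\|K\|_1=\lambda,
$$
forcing both inequalities to be equalities. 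Setting $P:=K_++K_-\ge 0$: equality pins the ranges of $K_+$ and $K_-$ inside the $+\lambda$ and $-\lambda$ eigenspaces of $M$ respectively, whence $M^2K_\pm=\lambda^2 K_\pm$, so $M^2P=\lambda^2P$ and (by Hermiticity) $PM^2=\lambda^2P$, giving (i); and $PM=\lambda K_+-\lambda K_-=\lambda K$ has zero diagonal, giving (ii).

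The expected main obstacle is the spectral/support step in the necessity part: extracting, from the single scalar equality $\text{tr}(KM)=\lambda\|K\|_1$, the structural conclusion that $K_+$ is supported on $\ker(\lambda I-M)$ and $K_-$ on $\ker(\lambda I+M)$. The clean way to do this is via the identity $\text{tr}\bigl(K_+^{1/2}(\lambda I-M)K_+^{1/2}\bigr)=0$: since $\lambda I-M\ge 0$ and the inner matrix is positive semidefinite, the trace being zero forces $(\lambda I-M)^{1/2}K_+^{1/2}=0$, so $\text{range}(K_+)=\text{range}(K_+^{1/2})\subset\ker(\lambda I-M)$; the argument for $K_-$ is symmetric.
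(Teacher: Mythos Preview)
Your proof is correct. Note, however, that the paper does not actually prove Theorem~\ref{teo matrices minimales}: it is quoted from \cite{ammrv} (Theorem~3.3 there), which in turn adapts the Hahn--Banach/duality argument of \cite{duranmatarecht} from the $C^*$-algebra setting to matrices. So there is no ``paper's own proof'' to compare against in this document.

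That said, your argument is precisely in the spirit of those references. The sufficiency direction via the trace inequality $\mathrm{tr}\bigl(P(M+D)^2\bigr)\le\|M+D\|^2\,\mathrm{tr}(P)$ is the standard one; your handling of the cross terms is fine (note that $\mathrm{tr}(PDM)=\overline{\mathrm{tr}(PMD)}=0$ also works). For necessity, the duality $\bigl(\mh,\|\cdot\|\bigr)^*\cong\bigl(\mh,\|\cdot\|_1\bigr)$ together with the best-approximation form of Hahn--Banach is exactly the mechanism used in \cite{duranmatarecht,ammrv}; your extraction of the support condition from $\mathrm{tr}\bigl(K_+^{1/2}(\lambda I-M)K_+^{1/2}\bigr)=0$ is clean and complete. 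The only cosmetic point is that the theorem statement omits ``$P\neq 0$'', which you correctly supply; without it the sufficiency direction is vacuous.
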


Previous attempts to describe minimal matrices beyond this theorem were done in \cite{ammrv} in $3\times 3$ matrices. In that work, all $3\times 3$ minimal matrices were parametrized. However, Theorem \ref{teo matrices minimales} does not show how to construct $n\times n$ minimal matrices. Our goal in the present paper is to study some properties of $n\times  n$ minimal matrices that allow the construction of them.

This minimal operators were studied recently in \cite{rieffel} where Theorem 2.2 of \cite{ammrv} was used to relate Leibnitz seminorms with quotient norms in C$^*$-algebras.
% 
% The problem of describing this minimal operators in this sense also appeared in the study of best approximation from C$^*$-subalgebras (see \cite{rieffel}). 

\section{Preliminaries and notation}
Let us call with $\mh$ the set of $n\times n$ Hermitian complex matrices and with $\diag$ the subset of the diagonal real matrices.
In these algebras we will denote with $\|\ \|$ the usual operator norm, that is $\| A\|=\text{max}\{|\sigma|:\sigma \text{ is an eigenvalue of } A\}$ if $A\in\mh$.

Given a matrix $A\in\mh$ we will call with $\lambda(A)\subset \R^n$ the set of the eigenvalues of $A$ in decreasing order and counting multiplicity, that is, 
$$
\lambda(A)=\left(\lambda_1, \lambda_2, \dots, \lambda_n\right),
$$ 
with $\lambda_1\geq\lambda_2\geq \dots\geq \lambda_n$, and $\lambda_i$ an eigenvalue of $A$. The spectrum of $A$ will be denoted with 
$$
\sigma(A)=\{\sigma_1,\sigma_2,\dots,\sigma_r\}
$$
where the eigenvalues of $A$ are listed just once and without any prescribed order.

We will denote with $\{e_i\}_{i=1}^n$ the usual canonical basis of $\C^n$ and with ``tr'' the usual trace of matrices. 
% 
% Given a vector $v\in\C^n$ we wil denote with  $\overrightarrow{|v|^2}$ the vector of $\R^n_+$ obtained from $v$ taking the absolute value of each coordinate squared, that is $\overrightarrow{|v|^2}=(|v_{1}|^2,|v_{2}|^2,\dots, |v_{n}|^2 )=\sum_{j=1}^n |v_{j}|^2 e_i$.

 Observe that if $M\in\mh$ and $D\in\diag$ then $(M+D)\in\mh$. Let us consider the quotient $\displaystyle{\mh}/{\diag}$ and the quotient norm 
$$
|||\ [M]\ |||=\min_{D\in \diag} \|M+D\|=\text{dist}\left(M,\diag\right)
$$
for $[M]=\{M+D: D\in \diag\} \in\displaystyle{\mh}/{\diag}$. The minimum is obtained by compactness arguments. 

\begin{definition}
A matrix $M\in\mh$ will be called {\bf minimal for $\diag$} or just {\bf minimal} if 
$$
\|M\|\leq\|M+D\|, \ \ \text{ for all } D\in\diag
$$
or equivalently, if $\| M\|=|||\ [M]\ |||=\underset{D\in \diag}{\min} \|M+D\|=\text{dist}\left(M,\diag\right)$.
\end{definition}

\begin{remark} Observe that if $M$ is a minimal matrix then its spectrum is ``centered'' in the sense that if $\|M\|=\lambda$,
then $-\lambda\in \sigma(M)$.
\end{remark}

For $a_1,a_2, \dots, a_n\in\R$ we will denote with diag$(a_1,a_2, \dots, a_n)$ or with diag$\{a_1,a_2, \dots, a_n\}$ the diagonal matrix of $\diag$ with $a_1,a_2, \dots, a_n$ in the diagonal.

Given $v\in\C^n$, we will call with $v\otimes v$ the linear map from $\C^n$ to $\C^n$ defined by $(v\otimes v)(x)= \langle x, v\rangle v$, for $x\in\C^n$ and $\langle \ ,\ \rangle$ the usual inner product in $\C^n$.

For $M\in \mh$ and $v\in\C^n$ we will write $\overline{M}$ and $\overline{v}$ to denote the matrix and vector
 obtained from $M$ and $v$ by conjugation of its canonical coordinates.

If $M, N\in M_n(\C)$ we will denote with $M\circ N$ the Schur or Hadamard product of those matrices defined by $(M\circ N)_{i,j}=M_{i,j} N_{i,j}$ for $1\leq i,j\leq n$. 
Therefore, if $v\in\C^n$, with coordinates in the canonical basis given by $v=(v_1, v_2,\dots, v_n)$, 
$$
v\circ \overline{v}=(|v_1|^2, |v_2|^2,\dots, |v_n|^2)\in\R_{+}^n.
$$
The usual matrix product will be denoted with $M N$, for $M, N\in M_n(\C)$.

\section{Minimal matrices}
The following is a slight variation of Theorem \ref{teo matrices minimales}.
\begin{theorem}
 \label{teo: matrices conmutan}
A matrix $M\in\mh$ is minimal in the quotient norm with respect to the diagonals if, and only if,
there exists a positive semidefinite matrix $P\in \mh$ such that,
\begin{itemize}
\item \label{teo: matrices conmutan C1} $P M^2=\lambda^2\,P$, where $||M||=\lambda$.
\item \label{teo: matrices conmutan C2} The diagonal elements of the product $P M$ are all zero,
\item \label{teo: matrices conmutan C3} $P$ commutes with $M$.
\end{itemize}
\end{theorem}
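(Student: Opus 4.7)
My plan is to deduce this from Theorem~\ref{teo matrices minimales}. The ``if'' direction requires no work, since conditions (1) and (2) alone already imply minimality; condition (3) is simply extra information. The entire content of the theorem lies in the ``only if'' direction, where one must upgrade the $P$ provided by Theorem~\ref{teo matrices minimales} to a positive semidefinite matrix that additionally commutes with $M$.

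The guiding observation is that any $P$ satisfying (1) and (2) already ``almost'' commutes with $M$. Indeed, taking adjoints in $PM^{2}=\lambda^{2}P$ and using that $P$ and $M^{2}$ are Hermitian gives $M^{2}P=\lambda^{2}P$, so $P$ commutes with $M^{2}$. A direct consequence is that $P$ annihilates every eigenspace $E_{\mu}$ of $M$ with $\mu^{2}\neq\lambda^{2}$, and thus, by Hermiticity, has range inside $E_{\lambda}\oplus E_{-\lambda}$. Let $Q_{\lambda}$ and $Q_{-\lambda}$ denote the spectral projections of $M$ onto these eigenspaces, and set
\[
A = Q_{\lambda}PQ_{\lambda},\qquad B = Q_{\lambda}PQ_{-\lambda},\qquad C = Q_{-\lambda}PQ_{-\lambda},
\]
so that $P = A+B+B^{*}+C$. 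My candidate for the modified matrix is the block-diagonal part
\[
P' := A + C = Q_{\lambda}PQ_{\lambda} + Q_{-\lambda}PQ_{-\lambda}.
\]
Being a sum of congruences of the PSD matrix $P$, it is positive semidefinite, and the identities $MQ_{\pm\lambda}=\pm\lambda Q_{\pm\lambda}$ immediately yield both $P'M^{2}=\lambda^{2}P'$ and $MP'=P'M$, so (1) and (3) hold for $P'$.

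The nontrivial point is verifying that $P'$ still satisfies condition (2). Using $PQ_{\mu}=0$ for $|\mu|\neq\lambda$, I would compute
\[
PM = \lambda PQ_{\lambda} - \lambda PQ_{-\lambda} = \lambda(A-C) + \lambda(B^{*}-B),
\]
so that
\[
(PM)_{ii} = \lambda(A_{ii}-C_{ii}) - 2i\lambda\,\mathrm{Im}(B_{ii}).
\]
Since $A_{ii}-C_{ii}\in\R$ while $-2i\,\mathrm{Im}(B_{ii})$ is purely imaginary, the hypothesis $(PM)_{ii}=0$ forces $A_{ii}=C_{ii}$ for every $i$ on its own, whence $(P'M)_{ii}=\lambda(A_{ii}-C_{ii})=0$. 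The main obstacle I anticipate is exactly this last step: it is not a priori clear that discarding the off-diagonal block $B+B^{*}$ preserves the vanishing of the diagonal of $PM$. The saving point is the real/imaginary parity in the decomposition of $(PM)_{ii}$, which automatically splits condition (2) into two independent scalar identities and delivers $A_{ii}=C_{ii}$ for free. Everything else reduces to routine manipulations with the spectral projections of $M$.
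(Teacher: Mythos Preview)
Your proposal is correct and follows essentially the same approach as the paper: both take the $P$ from Theorem~\ref{teo matrices minimales}, observe that $P$ commutes with $M^{2}$ and hence is supported on $E_{\lambda}\oplus E_{-\lambda}$, replace $P$ by its block-diagonal part $P'=Q_{\lambda}PQ_{\lambda}+Q_{-\lambda}PQ_{-\lambda}$, and verify condition~(2) for $P'$ via the same real/imaginary separation of $(PM)_{ii}$. The only differences are notational (your $A,B,C$ are the paper's $P_{1,1},P_{1,2},P_{2,2}$).
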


\begin{proof}%{Proof of Theorem~\ref{teo matrices minimales}}
Since $M$ is minimal if and only if the first two conditions of Theorem \ref{teo: matrices conmutan} hold for a positive $P$ (see, Theorem \ref{teo matrices minimales}), we only have to prove that a positive matrix $P_0$ that fulfills the three conditions of Theorem \ref{teo: matrices conmutan} can be chosen if $M$ is minimal.

Suppose that the spectrum of $M$ is $\sigma(M)=\{\lambda, -\lambda, \sigma_1,\dots, \sigma_{r}\}$, with $\|M\|=\lambda$ ($\lambda> |\sigma_i|$), for $1\leq i\leq r$ and that $\ql$, $\qml$, $Q_{\sigma_1}$, $\dots$, $Q_{\sigma_r}$ are the corresponding spectral projections of $M$. Then, 
$$
M=\lambda \ql-\lambda \qml+\sum_{i=1}^r \sigma_i Q_{\sigma_i}.
$$
Observe that since $\lambda>|\sigma_i|$, for $1\leq i\leq r$, then the spectral projection of $M^2$ for the eigenvalue $\lambda^2$ is $\ql+\qml$.
% %\text{ and }\
% and
% $$
% M^2=\lambda^2 \left(\ql+\qml\right)+\sum_{i=1}^r \sigma_i^2 Q_{\sigma_i}=\lambda^2 \left(\ql+\qml\right)+\sum_{j=1}^s \tau_j q_{\tau_j}.
% $$
% where $\tau_j$ and $q_{\tau_j}$ are the eigenvalues and corresponding spectral projections of $M^2$ (with $\lambda^2>\tau_j$).
% %(with eventually $\tau_j=\sigma_{i_1}^2+\sigma_{i_2}^2$ and $q_{\tau_j}=Q_{\sigma_{i_1}}+Q_{\sigma_{i_2}}$).

Since we are supposing that $M$ is minimal, there exists a positive semidefinite matrix $P$ that verifies the two conditons of Theorem \ref{teo matrices minimales}. Then, since $P M^2=\lambda^2 P$, then $P$ commutes with $M^2$. 
Then taking the same unitary to diagonalize $P$ and $M^2$, and using that $P M^2=\lambda^2 P$, it can be proved that $P Q=0$ for every spectral projection $Q$ of $M^2$, except the one corresponding to the eigenvalue $\lambda^2$, that is, $\ql+\qml$. Therefore, the representation of $P$ and $M$ in blocks corresponding with the orthogonal decomposition given by the range of the orthogonal projections $\ql$, $\qml$ and $I-\ql-\qml$ (respectively) is
$$
P=
\left(
\begin{array}{ccc}
P_{1,1} & P_{1,2}& 0  \\
P_{1,2}^* & P_{2,2}& 0 \\
0 & 0 &0 
\end{array}
\right)
%\begin{array}{c} \ql \\ \qml\\ I-\ql-\qml \end{array}
\text{ and }
M=
\left(
\begin{array}{ccc}
\lambda  & 0& 0  \\
0 & -\lambda & 0 \\
0 & 0 &\sum_{i=1}^r \sigma_i Q_{\sigma_i} 
\end{array}
\right).
%\begin{array}{c} \ql \\ \qml\\ I-\ql-\qml \end{array}
$$
Then, using the second condition of Theorem \ref{teo matrices minimales}, that is, $\langle PM e_i,e_i\rangle=0$ for the canonical basis $\{e_i\}_{i=1,\dots,n}$, we obtain that
$$
\langle PM e_i,e_i\rangle=
\langle
\left(
\begin{array}{ccc}
\lambda P_{1,1} & -\lambda P_{1,2}& 0  \\
\lambda P_{1,2}^* &-\lambda P_{2,2}& 0 \\
0 & 0 &0 
\end{array}
\right)
\left(
\begin{array}{c}
\ql e_i \\
\qml e_i\\
e_i-\ql e_i-\qml e_i
\end{array}
\right)
,\left(
\begin{array}{c}
\ql e_i \\
\qml e_i\\
e_i-\ql e_i-\qml e_i
\end{array}
\right)
\rangle
=0
$$
for all $i=1,\dots,n$.
Then, since $P_{1,1} \ql=P_{1,1}$, $P_{1,2} \qml=P_{1,2}$, $P_{1,2}^* \ql=P_{1,2}^*$ and $P_{2,2} \qml=P_{2,2}$, it follows that
$$
\langle \lambda P_{1,1}e_i-\lambda P_{1,2} e_i, e_i\rangle+ \langle \lambda P_{1,2}^*e_i-\lambda P_{2,2}e_i, e_i\rangle=\lambda \langle (P_{1,1}-P_{2,2}) e_i, e_i\rangle+ \lambda \langle (P_{1,2}^*-P_{1,2})e_i, e_i\rangle=0
$$
for all $i=1,\dots,n$. The term $\langle(P_{1,1}-P_{2,2}) e_i, e_i\rangle$ in the previous equation is real, since $P_{1,1}=\ql P\ql$ and $P_{2,2}=\qml P\qml$ are positive semidefinite matrices. The term $\langle (P_{1,2}^*-P_{1,2})e_i, e_i\rangle$ is purely imaginary since $\overline{\langle (P_{1,2}^*-P_{1,2})e_i, e_i\rangle}=-\langle (P_{1,2}^*-P_{1,2})e_i, e_i\rangle$. Then both terms must be zero, which implies that $\langle P_{1,1} e_i,e_i\rangle=\langle P_{2,2} e_i, e_i\rangle$. Therefore, the matrices $P_{1,1}$ and $P_{2,2}$ have the same diagonal in the canonical basis $\{e_i\}_{i=1,\dots,n}$.
Then, if we define 
$$
P_0=
\left(
\begin{array}{ccc}
P_{1,1} &  0& 0  \\
0 & P_{2,2}& 0 \\
0 & 0 &0 
\end{array}
\right),
$$
this matrix verifies
\begin{equation}\label{diagonales nulas}
\langle P_0 M e_i, e_i\rangle=
\lambda \left(\langle P_{1,1} e_i,e_i\rangle-\langle P_{2,2} e_i, e_i\rangle\right)=0 
\end{equation}

Moreover, $P_0\geq 0$ and, using the block decompositions of $M$ and $P_0$, it also verifies that
\begin{equation}\label{p0m2=l2p0 y conmutan}
P_0 M^2=\lambda^2 P_0 \text{ , and } P_0M=MP_0.
\end{equation}
Therefore, the equalities (\ref{diagonales nulas}) and (\ref{p0m2=l2p0 y conmutan}) imply that the positive semidefinite matrix $P_0$ verifies the three properties required.
\end{proof}

\begin{remark}
 Observe that the matrix $P_0$ of Theorem \ref{teo: matrices conmutan} was obtained as a diagonal block matrix in terms of the spectral projections $\ql$, $\qml$, $I-\ql-\qml$ of $M$ from any matrix $P$ verifying Theorem \ref{teo matrices minimales}.
\end{remark}

The proof of Theorem \ref{teo: matrices conmutan} suggests another equivalent condition for being minimal:

\begin{corollary}\label{corolario: p mas y p menos}
 Given a matrix $M\in\mh$  the following statements are equivalent:
\begin{itemize}
\item $M$ is minimal
\item  $\pm\|M\|\in\sigma(M)$ and 
there exist a pair positive semidefinite matrices $P_+, P_-\in \mh$, such that, if $Q_{\|M\|}$, $Q_{-\|M\|}$ are the spectral projections of $M$ with respect to the eigenvalues $\pm \|M\|$ respectively, they satisfy the following
\begin{itemize}
 \item[i) ] $P_+ Q_{\|M\|}=Q_{\|M\|} P_+=P_+$
\item[ii) ] $P_- Q_{-\|M\|} =Q_{-\|M\|} P_-=P_-$
\item[iii) ] $\langle P_- e_i, e_i\rangle=\langle P_+ e_i, e_i\rangle$, for all $e_i$, $i=1,\dots, n$, the canonical basis of $\C^n$.
\end{itemize}

\end{itemize}

 \end{corollary}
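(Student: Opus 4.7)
The plan is to reduce the corollary to Theorem \ref{teo matrices minimales} together with the block-decomposition argument already carried out inside the proof of Theorem \ref{teo: matrices conmutan}. In effect, both directions come down to repackaging the positive witness $P$ (or its refinement $P_0$) as two positive operators supported separately on the $\pm\lambda$ eigenspaces.

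For the ($\Leftarrow$) implication, I would take $P := P_+ + P_-$, which is positive semidefinite. Conditions (i)--(ii) force the range of $P_+$ to lie in $\mathrm{Range}(\ql)$ and the range of $P_-$ in $\mathrm{Range}(\qml)$, with each operator vanishing on the orthogonal complement of its eigenspace. Since $M\ql = \lambda \ql$ and $M\qml = -\lambda \qml$, it follows that $MP_\pm = \pm\lambda P_\pm$, and taking adjoints (all operators are Hermitian) gives $P_\pm M = \pm \lambda P_\pm$ as well. Consequently $PM^2 = \lambda^2 P$ holds on the nose, while the diagonal entries of $PM = \lambda(P_+ - P_-)$ equal $\lambda(\langle P_+ e_i, e_i\rangle - \langle P_- e_i, e_i\rangle)$, which are all zero by (iii). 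Theorem \ref{teo matrices minimales} then yields minimality of $M$.

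For the ($\Rightarrow$) implication, the centering remark guarantees $\pm\|M\|\in\sigma(M)$, so that $\ql$ and $\qml$ are defined. Starting from any $P$ supplied by Theorem \ref{teo matrices minimales}, set
\[
P_+ := \ql\, P\, \ql, \qquad P_- := \qml\, P\, \qml;
\]
these are positive semidefinite and satisfy (i)--(ii) by construction. The content of (iii) is precisely what the proof of Theorem \ref{teo: matrices conmutan} already establishes: the identity $PM^2 = \lambda^2 P$ forces the block of $P$ on $\mathrm{Range}(I - \ql - \qml)$ to vanish, and then splitting $\langle PM e_i, e_i\rangle = 0$ into its real and purely imaginary parts produces exactly $\langle P_+ e_i, e_i\rangle = \langle P_- e_i, e_i\rangle$ for every $i$. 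I do not anticipate any genuine obstacle beyond this bookkeeping with the spectral projections, since the essential computation has been carried out once already.
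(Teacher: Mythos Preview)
Your proposal is correct and follows essentially the same route as the paper: for the forward direction you take $P_+=\ql P\ql$ and $P_-=\qml P\qml$ and invoke the real/imaginary splitting already done in the proof of Theorem~\ref{teo: matrices conmutan}, and for the converse you set $P=P_++P_-$ and verify the hypotheses of Theorem~\ref{teo matrices minimales} directly. The only cosmetic difference is that the paper appeals to Theorem~\ref{teo: matrices conmutan} (which adds the commutation property) rather than Theorem~\ref{teo matrices minimales} in the converse, but your check of $PM^2=\lambda^2P$ and the zero-diagonal condition is exactly the ``direct calculation'' the paper leaves to the reader.
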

\begin{proof}
 If we suppose that $M$ is minimal it suffices to choose $P_+=P_{1,1}$ and $P_-=P_{2,2}$ from the proof of Theorem \ref{teo: matrices conmutan}. 

If there exist such $P_+$ and $P_-$ then a direct calculation shows that the matrix $P=P_+ +P_-$ fulfills the requirements of Theorem \ref{teo: matrices conmutan}, and therefore $M$ is minimal.
\end{proof}

This corollary motivates the following definition.

\begin{definition}
 Given a positive semidefinite matrix $P\in\mh$, another positive semidefinite $Q\in\mh$ is called a {\bf companion} matrix of $P$ if,  $P Q=0$ (being $0$ the null matrix) and they both have the same diagonal in the canonical basis. We will say that $P$ has a companion $Q$, or that $P$ and $Q$ are companions.
\end{definition}

\begin{remark}

i) Note that if $P$ is a companion of $Q$, then $Q$ is a companion of $P$.

ii) If $P$ and $Q$ are companions then they must have the same trace since they have the same diagonal.

iii) If $P$ is a companion of $Q$ and $P\neq 0$, then $Q\neq 0$. This holds because if $Q=0$ then the diagonal of $P$ must be zero in the canonical basis. This yields to $P=0$ since $P$ is positive semidefinite, a contradiction. Therefore, if $P$ and $Q$ are companions and one of them is $0$, then the other must be $0$.

iv) Observe that not every positive semidefinite matrix $P$ has a companion. For example, if $P$ is invertible, then it has not got any companion matrix. Therefore, if a matrix $P$ has a companion, then $P$ must have non trivial kernel. 

v)
 Note that a matrix $P$ could have many companions. Take por example any $3\times 3$ complex Hadamard matrix $H$ (that is a matrix such that $|H_{i,j}|=1$ with orthogonal rows and columns), and consider the unitary matrix $U=\frac1{\sqrt{3}} H$. Then, if $\text{diag}\left(a,b,c\right)$ denotes the $3\times 3$ diagonal matrix with $a$, $b$ and $c$ in its diagonal, and we define $P=U \text{diag}\left(4,0,0\right)  U^*$ and $Q_t=U \text{diag}\left(0,4-t,t\right)  U^*$ for $t\in\R$ and $0\leq t\leq 4$, an easy check proves that $\{Q_t\}_{0\leq t\leq 4}$ are all different companion matrices of $P$.

\end{remark}

In the following corollary, if $Q\in M_n(\C)$, then ran$(Q)$ will denote the range of the corresponding linear transformation.

\begin{corollary}\label{corolario relacion entre companneras y minimales}
Given $S_1$, $S_2$ subspaces of $\C^n$ with $S_1\perp S_2$, then the following statements are equivalent:
\begin{itemize}
 \item[i) ] There exist positive semidefinite matrices $P_1, P_2\in\mh$,  with ran$(P_1)\subset S_1$ and ran$(P_2)\subset S_2$, such that $P_1$ and $P_2$ are companions.
\item [ii) ] $M=\lambda\ P_{S_1}-\lambda\ P_{S_2}+R$ is a minimal matrix, for every $\lambda>0$ and $R\in\mh$  such that $P_{S_1} R=P_{S_2} R=0$ and $\|R\|<\lambda$ (with $P_{S_1}$ and $P_{S_2}$ the respective orthogonal projections onto the subspaces $S_1$ and $S_2$).
\end{itemize}
\end{corollary}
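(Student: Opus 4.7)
The plan is to reduce both implications to Corollary \ref{corolario: p mas y p menos} after computing the spectral projections of $M = \lambda P_{S_1} - \lambda P_{S_2} + R$ at the eigenvalues $\pm\|M\|$.

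My first step is a block-diagonal analysis of $M$. Since $R$ is Hermitian and $P_{S_1}R = P_{S_2}R = 0$, taking adjoints gives $R P_{S_1} = R P_{S_2} = 0$. Relative to the orthogonal decomposition $\C^n = S_1 \oplus S_2 \oplus (S_1 \oplus S_2)^{\perp}$, the matrix $M$ is block-diagonal with blocks $\lambda I$, $-\lambda I$, and $R|_{(S_1\oplus S_2)^{\perp}}$. The hypothesis $\|R\| < \lambda$ then yields $\|M\| = \lambda$ and identifies the spectral projections as $Q_{\lambda} = P_{S_1}$ and $Q_{-\lambda} = P_{S_2}$.

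For (i) $\Rightarrow$ (ii), given companion matrices $P_1, P_2$ with $\operatorname{ran}(P_1) \subset S_1$ and $\operatorname{ran}(P_2) \subset S_2$, I would set $P_+ := P_1$ and $P_- := P_2$ and verify the three hypotheses of Corollary \ref{corolario: p mas y p menos}. Since $\operatorname{ran}(P_1) \subset S_1 = \operatorname{ran}(Q_\lambda)$ we have $Q_\lambda P_1 = P_1$, and taking adjoints gives $P_1 Q_\lambda = P_1$; the analogous identities hold for $P_2$ and $Q_{-\lambda}$. Condition (iii) of Corollary \ref{corolario: p mas y p menos} — equality of diagonals — is exactly the companion hypothesis. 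Hence $M$ is minimal.

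For the converse (ii) $\Rightarrow$ (i) I would specialise to $\lambda = 1$ and $R = 0$, so that $M_0 := P_{S_1} - P_{S_2}$ is minimal with $\|M_0\| = 1$ and $\pm 1$-eigenspaces $S_1$ and $S_2$. Corollary \ref{corolario: p mas y p menos} then supplies positive semidefinite matrices $P_+, P_-$ satisfying its conditions (i)--(iii) for $M_0$, and I would set $P_1 := P_+$, $P_2 := P_-$. Conditions (i)--(ii) of the corollary force $\operatorname{ran}(P_j) \subset S_j$, condition (iii) is the equality of diagonals, and finally $S_1 \perp S_2$ gives $\operatorname{ran}(P_2) \subset S_2 \subset S_1^{\perp} \subset \ker(P_1)$, so $P_1 P_2 = 0$. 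Thus $P_1$ and $P_2$ are companions.

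I do not anticipate any real obstacle here; the only delicate point is confirming that the $\pm\lambda$-eigenspaces of $M$ coincide with $S_1$ and $S_2$ (and are not enlarged by contributions from $R$), which is precisely where the strict inequality $\|R\| < \lambda$ intervenes.
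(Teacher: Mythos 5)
Your proof is correct and follows essentially the same route as the paper's: both identify the block decomposition of $M$ along $S_1\oplus S_2\oplus(S_1\oplus S_2)^\perp$ to show $Q_{\pm\lambda}=P_{S_1},P_{S_2}$, and then apply the paper's minimality criteria to pass between companion pairs and minimality. The only cosmetic difference is that you route both implications through Corollary~\ref{corolario: p mas y p menos} (with $P_\pm=P_1,P_2$), and make the specialization to $\lambda=1,\ R=0$ explicit for the converse, whereas the paper uses Theorem~\ref{teo matrices minimales} with $P=P_1+P_2$ for (i)$\Rightarrow$(ii) and Theorem~\ref{teo: matrices conmutan} for (ii)$\Rightarrow$(i); since Corollary~\ref{corolario: p mas y p menos} is itself a reformulation of those results, the two arguments coincide in substance.
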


\begin{proof}
 Let us suppose first that $P_1$ and $P_2$ are companion matrices with the hypothesis of i). Consider then $\lambda>0$ and a matrix $M=\lambda\ P_{\text{ran}(P_1)}-\lambda\ P_{\text{ran}(P_2)}+R$, with $R$ such that its range is orthogonal to that of $P_1$ and $P_2$ and $\|R\|<\lambda$. Then taking $P=P_1+P_2$ it is easy to verify that $P$ and $M$ satisfy the conditions of Theorem \ref{teo matrices minimales} that imply that $M$ is minimal with $\|M\|=\lambda$.

Let us suppose now that $M=\lambda\ P_{S_2}-\lambda\ P_{S_2}+R$ as in item ii) is a minimal matrix. 
Then using that $S_1\perp S_2$, that ran$(R)$ is orthogonal to $S_1\oplus S_2$ and that $\|R\|<\lambda$, it is apparent that the spectral projections $\ql$, $\qml$ of $M$ with respect to the eigenvalues $\lambda$ and $-\lambda$ verify that $\ql=P_{S_1}$ and $\qml=P_{S_2}$.
Then there exists a positive semidefinite $P\in\mh$ that verifies the three statements of Theorem \ref{teo: matrices conmutan}. Therefore $P$ commutes with $M$.
As in the proof of Theorem \ref{teo: matrices conmutan} it can be proved that the representation of $P$ as a block matrix with respect to the orthogonal subspaces $S_1$, $S_2$ and $(S_1\oplus S_2)^\perp$ is
$$
P=
\left(
\begin{array}{ccc}
P_{1,1} &  0& 0  \\
0 & P_{2,2}& 0 \\
0 & 0 &0 
\end{array}
\right).
$$
We shall prove that $P_1=P_{1,1}=P_{S_1} P P_{S_1}$ and $P_2=P_{2,2}=P_{S_2} P P_{S_2}$ fulfill the conditions of i). Since $P$ is positive semidefinite it is apparent that $P_1$ and $P_2$ are also positive semidefinite. By definition, ran$(P_1)\subset S_1$ and ran$(P_2)\subset S_2$ and $P_1 P_2=0$. 

Moreover, since $PM$ has zero diagonal in the canonical basis, then 
$$
PM=
\left(
\begin{array}{ccc}
\lambda P_{1} &  0& 0  \\
0 & -\lambda P_{2}& 0 \\
0 & 0 &0 
\end{array}
\right)
$$
has zero diagonal in the canonical basis of $\C^n$. That means that $\lambda \langle (P_1-P_2) e_i,e_i\rangle=0$ for the canonical basis $\{e_i\}_{i=1,\dots,n}$ of $\C^n$, and then the diagonals of $P_1$ and $P_2$ coincide in that basis. Therefore, $P_1$ is a companion of $P_2$.
% 
% Then $P$ commutes with the spectral projections $P_{S_1}$ and $P_{S_2}$ of $M$. Let us name $P_1=P_{S_1} P  P_{S_1}$ and $P_2=P_{S_2} P P_{S_2}$. Moreover, since $PM^2=\lambda^2 P$, then as in the proof of Theorem \ref{teo: matrices conmutan} it can be proved that $PR=0$.
% 
%  $P P_{S_1}$ 
\end{proof}

\section{Characterization of companion matrices}

Corollary \ref{corolario relacion entre companneras y minimales} gives a direct relation between minimal matrices and pairs of companion matrices. Moreover, if one has a pair of companion matrices then a minimal matrix can be constructed as in ii) of that corollary. In this section we will describe some of the properties of the companion matrices.

Recall that, as it was mentioned in the preliminaries, for a given vector $v\in\C^n$, 
$$
v\circ \overline{v}=(|v_{1}|^2,|v_{2}|^2,\dots, |v_{n}|^2 )=\sum_{j=1}^n |v_{j}|^2 e_i \in\R^n_+,
$$ 
if $v$ has canonical coordinates $(v_{1}, v_{2},\dots, v_{n})$. 
For given vectors $\{w_k\}_{k=1}^m\subset \C^n$ we will denote with $\text{K}\left(\{w_k\}_{k=1}^{m}\right)$ and $\text{co}\left(\{w_k\}_{k=1}^{m}\right)$ 
%and $<\{w_k\}_{k=1,\dots,m}>$ the subspace
the cone and the convex hull generated by them (respectively).

\begin{theorem}\label{teorema: caracterizacion companneras}
Let $P\in\mh$ be a positive semidefinite matrix, its eigenvalues counted with multiplicity given by $\lambda(P)=\left(a_1, a_2,\dots, a_r,0,\dots,0\right)$, with $a_i>0$, $1\leq r < n$. Then the following properties of $P$ are equivalent
\begin{itemize}
 \item[i) ] $P$ has a companion $Q$.
\item[ii) ] There exist a set of orthonormal eigenvectors $\{v_1, v_2, \dots, v_r\}$ corresponding to the (strictly) positive eigenvalues $a_1, a_2, \dots a_r$ of $P$ and another set of orthonormal  eigenvectors  $\{v_{r+1}, v_{r+2}, \dots, v_n\}$ of the kernel of $P$, and $x_j\geq 0$ such that 
\begin{equation}
\label{ecuacion autovalores y autovectores companneras}
\sum_{i=1}^r a_i (v_i\circ\overline{v_i})=\sum_{j=r+1}^n x_j (v_j\circ\overline{v_j}).
\end{equation}

% and $\overrightarrow{|v_k|^2}
% =(|v_{1,k}|^2,|v_{2,k}|^2,\dots, |v_{n,k}|^2 )=\sum_{j=1}^n |v_{j,k}|^2 e_i$, if the eigenvector $v_k$ has canonical coordinates $(v_{1,k}, v_{2,k},\dots, v_{n,k})$. 
\item[iii) ]  There exist a set of orthonormal eigenvectors $\{v_1, v_2, \dots, v_r\}$ corresponding to the (strictly) positive eigenvalues $a_1, a_2, \dots a_r$ of $P$ and another set of orthonormal eigenvectors  $\{v_{r+1}, v_{r+2}, \dots, v_n\}$ of the kernel of $P$ such that 
$$
\sum_{i=1}^r a_i (v_i\circ\overline{v_i})\in \text{K}\left(\left\{ v_j\circ\overline{v_j}\right\}_{j=r+1}^{n} \right).
$$
% 
% to the cone generated by with $\overrightarrow{|v_k|^2}=(|v_{1,k}|^2,|v_{2,k}|^2,\dots, |v_{n,k}|^2 )$ if the eigenvector $v_k$ has canonical coordinates $v_{1,k}, v_{2,k},\dots, v_{n,k}$. 
\item [iv) ] 
There exists a set of orthonormal eigenvectors $\{v_i\}_{i=1}^{r}$ of $P$ corresponding to the (strictly) positive eigenvalues $a_1, a_2, \dots a_r$ of $P$ and  orthogonal eigenvectors  $\{v_j\}_{j=r+1}^{r+s}\subset\text{Ker}(P)$,  that verify
$$
\sum_{i=1}^{r} \frac{a_i}{\text{tr}(P)}  v_i\circ\overline{v_i}\in \text{co}\left(\left\{ v_j\circ\overline{v_j}\right\}_{j=r+1}^{r+s}\right).
$$
% \item [v) ] 
% There exists a set of orthonormal eigenvectors $\{v_i\}_{i=1}^{r}$ of $P$ corresponding to the (strictly) positive eigenvalues $a_1, a_2, \dots a_r$ of $P$ and  orthogonal eigenvectors  $\{v_j\}_{j=r+1}^{r+s}\subset\text{Ker}(P)$,  that verify
% $$
% \text{co}\left(\left\{ v_i\circ\overline{v_i}\right\}_{i=1}^{r}\right)
% \cap
% \text{co}\left(\left\{ v_j\circ\overline{v_j}\right\}_{j=r+1}^{r+s}\right)\neq \emptyset.
% $$
\end{itemize}
\end{theorem}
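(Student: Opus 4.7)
The plan is to establish the chain (i) $\Leftrightarrow$ (ii) $\Leftrightarrow$ (iii) $\Leftrightarrow$ (iv). The key identity driving the proof is that for any positive semidefinite $A\in\mh$ with orthonormal spectral decomposition $A=\sum_k\mu_k\, w_k\otimes w_k$, the $\ell$-th diagonal entry of $A$ in the canonical basis is $\sum_k \mu_k|w_{k,\ell}|^2$, so that the full diagonal, read as a vector of $\R^n$, equals $\sum_k \mu_k (w_k\circ\overline{w_k})$. Consequently, the requirement that $P$ and $Q$ share a diagonal becomes a coordinate identity in $\R^n$, which is exactly the form in which (ii) is phrased.

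For (i) $\Rightarrow$ (ii) I would start from a companion $Q$ of $P$. Since $PQ=0$ with both matrices positive semidefinite, $\text{ran}(Q)\subseteq\ker(P)$ and, symmetrically, $\text{ran}(P)\subseteq\ker(Q)$. I would then choose an orthonormal eigenbasis $\{v_1,\dots,v_r\}$ of $\text{ran}(P)$ realising the eigenvalues $a_1,\dots,a_r$, and diagonalise $Q$ restricted to $\ker(P)$ by an orthonormal basis $\{v_{r+1},\dots,v_n\}$ of $\ker(P)$, with eigenvalues $x_{r+1},\dots,x_n\geq 0$. Reading off the diagonals of $P=\sum_{i\le r} a_i\, v_i\otimes v_i$ and $Q=\sum_{j>r} x_j\, v_j\otimes v_j$ via the key identity then produces (\ref{ecuacion autovalores y autovectores companneras}). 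For (ii) $\Rightarrow$ (i) I would set $Q:=\sum_{j=r+1}^{n} x_j\, v_j\otimes v_j$, which is positive semidefinite with range inside $\text{span}\{v_{r+1},\dots,v_n\}=\ker(P)$, so that $PQ=0$, and verify by the same identity that its diagonal matches that of $P$.

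The equivalence (ii) $\Leftrightarrow$ (iii) is tautological from the definition of the cone $\text{K}$. For (iii) $\Leftrightarrow$ (iv) I would normalise by $\text{tr}(P)$: each $v_j\circ\overline{v_j}$ has coordinate sum $\|v_j\|^2=1$, so $\sum_j x_j(v_j\circ\overline{v_j})$ has coordinate sum $\sum_j x_j$; matching to $\sum_i a_i=\text{tr}(P)$ forces $\sum_j x_j=\text{tr}(P)$. Dividing (\ref{ecuacion autovalores y autovectores companneras}) by $\text{tr}(P)>0$ turns the cone decomposition into a convex one, and dropping the indices with $x_j=0$ yields (iv) with $s$ equal to the number of nonzero weights. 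Conversely, multiplying (iv) by $\text{tr}(P)$, setting $x_j:=t_j\,\text{tr}(P)$, and padding the family $\{v_j\}_{j=r+1}^{r+s}$ to an orthonormal basis of $\ker(P)$ by appending eigenvectors with weight $0$ recovers (iii).

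The main obstacle I anticipate is the simultaneous spectral bookkeeping in (i) $\Rightarrow$ (ii): producing a single orthonormal basis of $\C^n$ that eigen-decomposes $P$ on $\text{ran}(P)$ and $Q$ on $\ker(P)$ at the same time. This is resolved by the orthogonal splitting $\C^n = \text{ran}(P)\oplus\ker(P)$, together with the vanishing of $Q$ on $\text{ran}(P)$ and of $P$ on $\ker(P)$, which allows the two diagonalisations to be carried out independently on each half and glued into a basis of $\C^n$ that is compatible with both matrices; everything else is routine translation between the matrix and the $\circ$-coordinate pictures.
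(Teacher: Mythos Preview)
Your proof is correct and follows essentially the same route as the paper: simultaneous diagonalisation of $P$ and $Q$ (you phrase it via the splitting $\text{ran}(P)\oplus\ker(P)$, the paper via a common unitary $V$ with the eigenvectors as columns), the tautological cone reading for (ii)$\Leftrightarrow$(iii), and the trace normalisation for (iv). One small point to tidy up in (iv)$\Rightarrow$(iii): the vectors $\{v_j\}_{j>r}$ in (iv) are only assumed \emph{orthogonal}, so you should normalise them first (absorbing $\|v_j\|^2$ into the weights $x_j$) before padding to an orthonormal basis of $\ker(P)$.
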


\begin{proof}
Let us suppose first that $P$ has a companion $Q$, and the spectrum of $P$, counting multiplicity of eigenvalues and in descending order, is $\lambda(P)=\left(a_1, a_2, \dots,  a_{r}, 0,\dots, 0\right)$, with $a_r>0$. Then, since $P Q=0$, they commute, and therefore we can choose a unitary matrix $V$ that diagonalizes both $P$ and $Q$. 
We can also choose $V$ in the following way:
\begin{equation}\label{unitaria autovectores columna}
V=
\left(
\begin{array}{cccc}
v_{1,1} & v_{1,2}&\dots & v_{1,n}\\
v_{2,1} & v_{2,2}&\dots & v_{2,n}\\
\vdots & \vdots &\ddots & \vdots\\
v_{n,1} & v_{n,2}&\dots & v_{n,n}\\
\end{array}
\right)
\end{equation}
where the columns are the coordinates in the canonical basis of $\C^n$ of an orthonormal basis $\{v_i\}_{1\leq i\leq n}$ of eigenvectors of $P$ and $v_i=(v_{1,i} , v_{2,i} , \dots , v_{n,i})$ is the corresponding eigenvector of $a_i$ (for $1\leq i\leq r$). 
Then, this $V$ verifies that $P=V D_P V^*$ and $Q=V D_Q V^*$, where $D_P$ is a diagonal matrix with $\lambda(P)$ in its diagonal and $D_Q$ is a diagonal with the eigenvalues of $Q$ in its diagonal. Since $Q$ must be positive and $P Q=0$, then the diagonal of $D_Q$ has to be of the form $\{0,0,\dots,0,x_{r+1}, x_{r+2},\dots, x_n\}$ with $x_i\geq 0$, for $r+1\leq i\leq n$. Moreover, since $P$ and $Q$ have identical diagonals in the canonical basis, then considering the decompositions
\begin{align*}
P=&VD_PV^*= \\
=&\left(
\begin{array}{cccc}
v_{1,1} & v_{1,2}&\dots & v_{1,n}\\
v_{2,1} & v_{2,2}&\dots & v_{2,n}\\
\vdots & \vdots &\ddots & \vdots\\
v_{n,1} & v_{n,2}&\dots & v_{n,n}\\
\end{array}
\right)
.
\left(
\begin{array}{ccccccc}
a_1 & 0&\dots &\dots& \dots& \dots& 0\\
0 & a_2&0 & \dots& \dots& \dots& 0\\
\vdots & \vdots&\ddots & \vdots& \vdots& \vdots& 0\\
0 & \dots& 0 &a_r& 0 & \dots& 0\\
0 & \dots&0 & 0& 0& \dots& 0\\
\vdots & \vdots &\vdots & \vdots& \vdots &\ddots &\vdots\\
0 & \dots&\dots & \dots& \dots& \dots& 0\\
\end{array}
\right)
.
\left(
\begin{array}{cccc}
\overline{v_{1,1}} & \overline{v_{2,1}}&\dots & \overline{v_{n,1}}\\
\overline{v_{1,2}} & \overline{v_{2,2}}&\dots & \overline{v_{n,2}}\\
\vdots & \vdots &\ddots & \vdots\\
\overline{v_{1,n}} & \overline{v_{2,n}}&\dots & \overline{v_{n,n}}\\
\end{array}
\right)
\end{align*}
and
\begin{align*}
Q=&VD_QV^*= \\
=&\left(
\begin{array}{cccc}
v_{1,1} & v_{1,2}&\dots & v_{1,n}\\
v_{2,1} & v_{2,2}&\dots & v_{2,n}\\
\vdots & \vdots &\ddots & \vdots\\
v_{n,1} & v_{n,2}&\dots & v_{n,n}\\
\end{array}
\right)
.
\left(
\begin{array}{ccccccc}
0 & 0&\dots& \dots& \dots& \dots& 0\\
0 & 0&0 & \dots& \dots& \dots& 0\\
\vdots & \vdots&\ddots & \vdots& \vdots& \vdots& 0\\
0 & \dots& 0 &0& 0 & \dots& 0\\
0 & \dots&0 & 0& x_{r+1}& \dots& 0\\
\vdots & \vdots &\vdots & \vdots& \vdots &\ddots &\vdots\\
0 & \dots&\dots & \dots& \dots& \dots& x_n\\
\end{array}
\right)
.
\left(
\begin{array}{cccc}
\overline{v_{1,1}} & \overline{v_{2,1}}&\dots & \overline{v_{n,1}}\\
\overline{v_{1,2}} & \overline{v_{2,2}}&\dots & \overline{v_{n,2}}\\
\vdots & \vdots &\ddots & \vdots\\
\overline{v_{1,n}} & \overline{v_{2,n}}&\dots & \overline{v_{n,n}}\\
\end{array}
\right)
\end{align*}
we obtain the $n$ following equations
$$
\left\{
\begin{array}{ccc}
 \sum_{i=1}^r a_i |v_{1,i}|^2&=& \sum_{j=r+1}^n x_j |v_{1,j}|^2\\
\sum_{i=1}^r a_i |v_{2,i}|^2&=& \sum_{j=r+1}^n x_j |v_{2,j}|^2\\
\vdots&\vdots&\vdots\\
\sum_{i=1}^r a_i |v_{n,i}|^2&=& \sum_{j=r+1}^n x_j |v_{n,j}|^2
\end{array}
\right..
$$
Then,
$$
\left(\sum_{i=1}^r a_i |v_{1,i}|^2, \sum_{i=1}^r a_i |v_{2,i}|^2, \dots,\sum_{i=1}^r a_i |v_{n,i}|^2\right)
=\left(\sum_{j=r+1}^n x_j |v_{1,j}|^2, \sum_{j=r+1}^n x_j |v_{2,j}|^2, \dots, \sum_{j=r+1}^n x_j |v_{n,j}|^2 \right) 
$$
% 
% 
% \begin{align}
% (\sum_{i=1}^r a_i |v_{1,i}|^2, \sum_{i=1}^r a_i |v_{2,i}|^2,& \dots,\sum_{i=1}^r a_i |v_{n,i}|^2)=\nonumber\\
% =(\sum_{j=r+1}^n x_j |v_{1,j}|^2, \sum_{j=r+1}^n x_j |v_{2,j}|^2,& \dots, \sum_{j=r+1}^n x_j |v_{n,j}|^2 )
% \end{align}
% 
% 
% \begin{align*}
% (\sum_{i=1}^r a_i |v_{1,i}|^2, \sum_{i=1}^r a_i |v_{2,i}|^2,& \dots,\sum_{i=1}^r a_i |v_{n,i}|^2)=\\
% =(\sum_{j=r+1}^n x_j |v_{1,j}|^2, \sum_{j=r+1}^n x_j |v_{2,j}|^2,& \dots, \sum_{j=r+1}^n x_j |v_{n,j}|^2 )
% \end{align*}
and
\begin{equation}\label{cono}
\sum_{i=1}^r a_j \left(|v_{1,i}|^2,|v_{2,i}|^2,\dots,|v_{n,i}|^2\right)= \sum_{j=r+1}^n x_j \left( |v_{1,j}|^2,|v_{2,j}|^2,\dots
,|v_{n,j}|^2\right)
\end{equation}
which proves ii).

Now suppose that ii) holds. If we define $Q=\sum_{j=r+1}^n x_j\ (v_j\otimes v_j)$, with $x_j$ and $v_j$ as in ii), then it verifies that $PQ=0$. Moreover, since the equality (\ref{ecuacion autovalores y autovectores companneras}) is equivalent to the equality of the diagonals of $P$ and $Q$, then $Q$ is a companion of $P$. 

Assertion iii) is equivalent to ii) since 
$
\sum_{j=r+1}^n x_j (v_j\circ\overline{v_j})
$
is a generic element of the cone generated by $\{v_j\circ\overline{v_j}\}_{j=r+1}^{n}$.

Statement ii) implies iv) because the equality (\ref{ecuacion autovalores y autovectores companneras}) is equivalent to the fact that $P$ has the same diagonal than $Q=\sum_{j=r+1}^{r+s} x_j\ (v_j\otimes v_j)$. Then $P$ and $Q$ have the same trace equal to $\sum_{i=1}^{r} a_i=\sum_{j=r+1}^{r+s} x_j$. Therefore  
$$
\sum_{j=1}^r \frac{a_j}{\sum_{i=1}^{r} a_i} v_j\circ\overline{v_j}=
\sum_{j=r+1}^n \frac{x_j}{\sum_{j=r+1}^{r+s} x_j} v_j\circ\overline{v_j}\ \in \
\text{co}\left(\{v_j\circ\overline{v_j}\}_{j=r+1}^{r+s}\right).
$$
If iv) holds then obviously iii) an ii) hold.

% And finally, iv) and v) are obviously equivalent since $\sum_{i=1}^{r} \frac{a_i}{\text{tr}(P)}  v_i\circ\overline{v_i}\in \text{co}\left(\left\{ v_i\circ\overline{v_i}\right\}_{i=1}^{r}\right)$.
\end{proof}

 Considering the results obtained in Corolllary \ref{corolario relacion entre companneras y minimales} and Theorem \ref{teorema: caracterizacion companneras} we can conclude that a matrix $M=\lambda P_{S_1}-\lambda P_{S_2}+R\in\mh$ (with $S_1\perp S_2$ and $R\in\mh$ with $\|R\|<\lambda$) is minimal, if and only if, there exist orthonormal vectors $\{v_i\}_{i=1}^r\subset S_1$ and $\{v_j\}_{j=r+1}^{r+s}\subset S_2$ such that
$$
\text{co}\big( \{v_i\circ\overline{v_i}\}_{i=1}^{r} \big) \cap
\text{co}\left(\{v_j\circ\overline{v_j}\}_{j=r+1}^{r+s}\right) \neq \emptyset.
$$

Note also that any minimal matrix is necessarily of this form.

Moreover, given a matrix $M\in\mh$, then $M$ is minimal, if and only if, there exists a unitary matrix $U$ such that $U^*MU=\hbox{diag}\left(\lambda(M)\right)$ and the rows of the unistochastic matrix $U^*\circ \overline{U^*}$ have the required properties with respect to the eigenspaces of $\lambda=\|M\|$ and $-\lambda$ of $M$. Namely, that 
$$
\text{co}\left(\{v_i\circ\overline{v_i}\}_{i=1}^{r}\right)\cap\text{co}\left(\{v_j\circ\overline{v_j}\}_{j=r+1}^{r+s}\right)\neq \emptyset,
$$
where $\{v_i\}_{i=1}^{r}$ are the corresponding orthogonal eigenvectors of $\lambda$ (and rows of $U$) and $\{v_j\}_{j=r+1}^{r+s}$ are the corresponding orthogonal eigenvectors of $-\lambda$ (and rows of $U$).

Observe that following the notation of Theorem \ref{teorema: caracterizacion companneras} ii), since $\sum_{i=1}^r a_i=\sum_{j=r+1}^n x_j$, then 
$$
(0,0,\dots, 0)\prec (a_1,\dots,a_r,-x_{r+1},\dots,-x_n)=\vec{ax}
$$
 (where $\prec$ is the usual notation for majorization of vectors in $\R^n$, see \cite{marshall olkin}). Then the equations in (\ref{cono}) prove that the matrix
$$
V^*\circ \overline{V^*}=
\left(
\begin{array}{cccc}
|v_{1,1}|^2 & |v_{2,1}|^2 &\dots & |v_{n,1}|^2\\
|v_{1,2}|^2 & |v_{2,2}|^2&\dots & |v_{n,2}|^2\\
\vdots & \vdots &\ddots & \vdots\\
|v_{1,n}|^2 & |v_{2,n}|^2&\dots & |v_{n,n}|^2\\
\end{array}
\right)
$$
obtained from the matrix (\ref{unitaria autovectores columna}) is a doubly stochastic (in fact, unistochastic) matrix that verifies $(0,\dots,0)=\vec{ax}.\left(V^*\circ \overline{V^*}\right)$.
This suggests a relation with results in majorization of vectors in $\R^n$.

Take any $n$-tuple  $\vec{a0x}=(a_1,\dots, a_r, 0, \dots, 0, -x_1,\dots, -x_s)\in\R^n$, with $a_i\geq 0$ and $x_j\geq 0$, such that  $\sum_{i=1}^r a_i=\sum_{j=1}^s x_j$. Then it is apparent that $(0,\dots,0)\prec \vec{a0x}$. 
Therefore a concrete unitary or orthogonal matrix $U$ can be found (see \cite{horn, mirsky}) such that $(0,\dots,0)=\vec{a0x}.(U\circ \overline{U})$. Then, if we call with $v_k$ the $k$-th column of $U^*$ (for $k=1,\dots,n$), any matrix of the form 
\begin{equation}\label{ecuacion matrices minimales}
M=\lambda \sum_{i=1}^r v_i\otimes v_i+\sum_{h=r+1}^{n-s} \lambda_h (v_h\otimes v_h)-\lambda \sum_{j=n-s+1}^{n} v_j\otimes v_j
\end{equation}
is minimal provided that $\lambda>0$, $\lambda_h\in\R$ and $|\lambda_h|<\lambda$.
 These results, together with Corollary \ref{corolario relacion entre companneras y minimales} and Theorem \ref{teorema: caracterizacion companneras}
 allow to construct minimal matrices of any size.

The method to obtain minimal matrices $M$ mentioned in (\ref{ecuacion matrices minimales})  relies on which is the unitary $U$ retrieved from the unistochastic matrix.
The work of \cite{dhillon heath sustik tropp} shows different algorithms to find such a unitary or even orthogonal matrix $U$ that verifies $\overrightarrow{0}=\vec{a0x}. U\circ \overline{U}$. Nevertheless, the set of all posible unitaries $U$ that give the same unistochastic matrix is not known in general.
 The works of \cite{tadej zyczkowski} and \cite{zyczkowski kus slomczynski sommers} study the problem of describing the different matrices $U$ such that the mapping $U \mapsto U\circ \overline{U}$ gives the same unistochastic matrix.  
% 
% In general, for a given unistochastic matrix $U_0\circ \overline{U_0}$ all the unitary matrices $U$ that verify $U\circ \overline{U}=U_0\circ \overline{U_0}$ are not known. Therefore, since the process that we show here to construct minimal matrices is related to the problem of finding those unitaries, then finding all the concrete minimal matrices.
% Nevertheless, since there exist 

%There are also many papers related to the problem of constructing a matrix with prescribed diagonal and eigenvalues (see \cite{chu}, \cite{davies higham}, \cite{leite richa tomei}, \cite{zha zhang}). The works of \cite{haagerup} and \cite{beauchamp nicoara} study this construction when the doubly stochastic matrix is a Hadamard complex matrix. 

\begin{remark}
In \cite{ammrv} a different characterization of minimal $3\times3$ matrices were given. It was shown that given a $3\times 3$ matrix $M$, with $\lambda(M)=\left(\lambda,\mu,-\lambda\right)$, $|\mu|\leq\lambda=\|M\|$, then, $M$ was minimal, if and ony if, there exists an orthonormal eigenvector $v_\lambda$ of the eigenvalue $\lambda$ and a orthonormal eigenvector $v_{-\lambda}$ of the eigenvalue $-\lambda$ such that
 $v_\lambda\circ\overline{v_\lambda}=v_{-\lambda}\circ\overline{v_{-\lambda}}$. The statement remains valid if any of the eigenvalues has multiplicity two ($\mu=\pm \lambda$).
 The following is an example of a $4\times 4$ minimal Hermitian matrix where this condition does not hold. 
Let
$$
M=
\left(
\begin{array}{cccc}
 \frac{9}{14} & -\frac{15}{14}-\frac{i}{7} &
   -\frac{1}{7}+\frac{5 i}{7} & \frac{2}{7}+\frac{6
   i}{7} \\
 -\frac{15}{14}+\frac{i}{7} & \frac{13}{14} &
   -\frac{1}{7}+i & \frac{6 i}{7} \\
 -\frac{1}{7}-\frac{5 i}{7} & -\frac{1}{7}-i &
   \frac{5}{7} & -1-\frac{2 i}{7} \\
 \frac{2}{7}-\frac{6 i}{7} & -\frac{6 i}{7} &
   -1+\frac{2 i}{7} & \frac{5}{7}
\end{array}
\right).
$$
 Then $\lambda(M)=\left(2, 2, 1, -2\right)$, and the eigenspace of the eigenvalue $2$ is generated by the orthonormal eigenvectors
$
v_1=\frac1{5 \sqrt{2}}\left(-1-2 i,5,-3-i,1-3 i\right)
$
and 
$
v_2=\frac1{10 \sqrt{14}}\left(17-11 i,-15+5 i,-9+17 i,3-19 i\right).
$
The vector $w=\frac1{2\sqrt{2}} \left(1-i,1-i,1+i,1+i\right)$ is a orthonormal eigenvector of eigenvalue $-2$.
 A direct calculation shows that for $\alpha=\frac29$, then $\alpha (v_1\circ \overline{v_2})+(1-\alpha) (v_2\circ\overline{v_2})=w\circ \overline{w}=(\frac14,\frac14,\frac14,\frac14)$, which is enough to prove that $M$ is minimal (using Theorem \ref{teorema: caracterizacion companneras} and  Corollary \ref{corolario relacion entre companneras y minimales}). Nevertheless, there is not a single eigenvector $v$ in the eigenspace of $\lambda$ such that $v\circ \overline{v}=w\circ\overline{w}$. This follows after writing $v=\beta v_1+\gamma v_2$ with $\beta, \gamma\in\C$, and $|\beta|^2+|\gamma|^2=1$, and proving that $v\circ \overline{v}=w\circ\overline{w}$ could never happen (note that we can suppose that $\gamma=\sqrt{1-|\beta|^2}$).

\end{remark}

% 
% \item[iii) ] item is this: choose any sequence of numbers $a_1\geq a_2\geq \dots\geq a_r>0$ and $0<x_1\leq x_2\leq\dots\leq x_s$ such that $\sum_{i=1}^r a_i =\sum_{j=1}^s x_j$. 
% Then $\overrightarrow{0}=(0,0,\dots,0)\prec a_1, a_2, \dots, a_r,, 0,\dots, 0,-x_1, -x_2,\dots, -x_s)=\overrightarrow{ax0}\in\R^n$. Then find an orthostochastic matrix $\mathcal{O}\in M_n(\R)$ such that $\overrightarrow{0}=\overrightarrow{ax0}.\mathcal{O}$. Then find any unitary $U\in M_n(\C)$ such that $\mathcal{O}=U\circ \overline{U}$. 

% %%%%%%%((((((((((((())))))))))))))%%%%%%%%%

\vskip1cm

\end{document}